\def\pg{\mbox{\rm PG}}
\def\pgl{\mbox{\rm PGL}}
\def\SL{\mbox{\rm SL}}
\def\PSL{\mbox{\rm PSL}}
\def\gl{\mbox{\rm GL}}
\def\gf{\mbox{\rm GF}}
\def\q{\mbox{\rm Q}}
\def\w{\mbox{\rm W}}
\def\isdef{\stackrel{\mathrm{def}}=}
\def\A{\mathcal{A}}
\def\C{\mathcal{C}}
\let\oldS\S
\def\S{\mathcal{S}}
\def\P{\mathcal{P}}
\def\B{\mathcal{B}}
\def\I{\mathrel{\mathrm I}}
\def\O{\mathcal{O}}
\def\K{\mathcal{K}}
\def\Q{\mathcal{Q}}
\def\IS{\S=(\P,\B,\I)}
\def\Tr{\mathrm{Tr}\,}
\def\be#1{\begin{equation}\label{#1}}
\def\ee{\end{equation}}
\def\bee#1{\begin{equation}\label{#1}\def\arraystretch{1.2}\begin{array}{lllllllllllll}}
\def\eee{\end{array}\end{equation}}
\def\beee{\[\def\arraystretch{1.2}\begin{array}{lllllllllllll}}
\def\eeee{\end{array}\]}
\renewenvironment{matrix}{\left(\!\begin{array}{cccccccccccc}}{\end{array}\!\right)}
\newenvironment{determ}{\left|\!\begin{array}{cccccccccccc}}{\end{array}\!\right|}
\newenvironment{proof}{\noindent{\bf Proof. }}{\ignorespaces\rule{1pt}{0pt}\hfill $\square$\medskip\smallskip\smallskip}
\newtheorem{lemma}{Lemma}
\newtheorem{theorem}[lemma]{Theorem}
\newtheorem{proposition}[lemma]{Proposition}
\title{The known maximal partial ovoids of size $q^2-1$ of $\q(4,q)$}
\author{Kris Coolsaet, Jan De Beule\thanks{The author is a postdoctoral research fellow of the Research Foundation Flanders -- Belgium (FWO).}, and Alessandro Siciliano}
\date{}
\begin{document}
\maketitle

\begin{abstract}
  We present a description of maximal partial ovoids of size $q^2-1$
  of the parabolic quadric $\q(4,q)$ as sharply transitive subsets of
  $\SL(2,q)$ and show their connection with spread sets. This
  representation leads to an elegant explicit description of all known
  examples. We also give an alternative representation of these
  examples which is related to root systems.
\end{abstract}

{\bf Keywords}: maximal partial ovoid, generalized quadrangle,
parabolic quadric, special linear group, $\SL(2,q)$, transitive subset, spread set

\section{Introduction}

A (finite) \emph{generalized quadrangle} (GQ) is an incidence structure
$\IS$ in which $\P$ and $\B$ are disjoint non-empty sets of objects called
points and lines (respectively), and for which $\I\ \subseteq (\P \times
\B) \cup (\B \times \P)$ is a symmetric point-line incidence relation
satisfying the following axioms:
\begin{itemize}
\item[(i)] each point is incident with $1+t$ lines $(t \geqslant 1)$ and
two distinct points are incident with at most one line;
\item[(ii)] each line is incident with $1+s$ points $(s \geqslant 1)$ and
two distinct lines are incident with at most one point;
\item[(iii)] if $x$ is a point and $L$ is a line not incident with $x$,
then there is a unique pair $(y,M) \in \P \times \B$ for which $x \I M \I
y \I L$.
\end{itemize}
The integers $s$ and $t$ are the parameters of the GQ and $\S$ is said to
have order $(s,t)$. If $s=t$, then $\S$ is said to have order $s$. If $\S$
has order $(s,t)$, then $|\P| = (s+1)(st+1)$ and $|\B| =
(t+1)(st+1)$ (see e.g. \cite{PayneThas1984}). 

If we interchange the roles of points and lines in a GQ we obtain a
new GQ $(\B,\P,\I)$ which is called the \emph{dual} of the original.

An {\em ovoid} of a GQ $\S$ is a set $\O$ of points of $\S$ such that every line
is incident with exactly one point of the ovoid. An ovoid of a GQ of order
$(s,t)$ has necessarily size $1+st$. A {\em partial ovoid} of a
GQ is a set $\K$ of points such that every line contains {\em at most} one point
of $\K$. The difference $\rho = st + 1- |\K|$ between the size of an
ovoid and the size of a particular partial ovoid $\K$, is called the
{\em deficiency} of $\K$. (Hence $\rho = 0$ if and only if $\K$ is an ovoid.)

A partial ovoid $\K$ is called {\em maximal} if and only if $\K \cup
\{p\}$ is not a partial ovoid for any point $p \in \P \setminus \K$,
in other words, if $\K$ cannot be extended to a larger partial ovoid.

It is a natural question to study {\em extendability} of partial
ovoids, i.e., for what values of $\rho$ can a partial ovoid of
deficiency $\rho$ be guaranteed to extend to a full ovoid?  The
following theorem is a typical result in this context.
\def\citeihatelatex#1#2{\cite[#1]{#2}}
\begin{theorem}[\citeihatelatex{2.7.1}{PayneThas1984}]\label{theo1}
  Let $\IS$ be a GQ of order $(s,t)$. Any partial ovoid of size
  $st-\rho$, $0 \leq \rho < \frac{t}{s}$ is contained in a uniquely
  defined ovoid of $\S$.
\end{theorem}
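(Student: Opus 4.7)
The plan is to identify the set $F$ of \emph{free} points---those $x\in\P\setminus\K$ collinear with no point of $\K$---and show that $|F|=\rho+1$, that the points of $F$ are pairwise non-collinear, and hence that $\K\cup F$ is an ovoid extending $\K$. Uniqueness of the extension then follows automatically, since any ovoid containing $\K$ must add exactly $\rho+1$ further points, each necessarily non-collinear with every point of $\K$ and therefore lying in $F$.

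For the upper bound on $|F|$, I would fix an \emph{uncovered line} $L$ (a line meeting no point of $\K$) and apply axiom (iii): each $p\in\K$ determines a unique $y_p\in L$ with $p\sim y_p$. Setting $n_x:=|\{p\in\K:p\sim x\}|$ we obtain $\sum_{x\in L}n_x=|\K|=st-\rho$, and $n_x\le t$ for $x\in L$ because the $\K$-neighbors of $x$ sit on distinct lines through $x$ other than $L$. Writing $m_L$ for the number of free points on $L$, the non-free points of $L$ satisfy $1\le n_x\le t$, so $st-\rho\le t(s+1-m_L)$, which gives $m_L\le 1+\rho/t<2$ and hence $m_L\le 1$. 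Since every line through a free point is uncovered, each free point lies on exactly $t+1$ uncovered lines; there are $(t+1)(\rho+1)$ uncovered lines in total, so double-counting pairs (free point, uncovered line through it) yields $(t+1)|F|=\sum_L m_L\le(t+1)(\rho+1)$, i.e.\ $|F|\le\rho+1$.

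The main obstacle is the matching lower bound $|F|\ge\rho+1$, where the hypothesis $\rho<t/s$ must enter essentially. I would attack it by a global second-moment estimate on $(n_x)_{x\notin\K}$, using the two identities
\[
  \sum_{x\notin\K}n_x=s(t+1)\,|\K|,\qquad
  \sum_{x\notin\K}n_x(n_x-1)=(t+1)\,|\K|(|\K|-1),
\]
obtained by counting pairs $(x,p)$ with $p\in\K$, $x\sim p$, respectively ordered triples $(x,p,p')$ with $p\neq p'$ in $\K$ (hence non-collinear) and $x$ one of their $t+1$ common neighbors. Combined with the per-line identity $\sum_{x\in L}n_x=st-\rho$ and the strict inequality $\rho s<t$, these should force the number of $x\notin\K$ with $n_x=0$ to be at least $\rho+1$. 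This counting step is the crux of the argument and the point at which the numerical assumption is indispensable.

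Once $|F|=\rho+1$ is established, equality in the double count forces $m_L=1$ for every uncovered line. Two free points cannot be collinear, for the line joining them would be uncovered and would contain two free points, contradicting $m_L\le 1$. Therefore $\K\cup F$ is a partial ovoid of size $st+1$, i.e.\ an ovoid, and uniqueness of the extension follows as noted above.
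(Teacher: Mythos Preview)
The paper does not supply its own proof of this theorem; it is quoted verbatim from Payne--Thas \cite[2.7.1]{PayneThas1984}. So there is nothing to compare your argument against within the present paper, and the question reduces to whether your sketch stands on its own.

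Your upper-bound part is fine: the computation $\sum_{x\in L}n_x=st-\rho$ together with $n_x\le t$ on an uncovered line gives $m_L\le 1+\rho/t<2$, hence $m_L\le1$, and the double count then yields $|F|\le\rho+1$. The concluding step (pairwise non-collinearity of free points, hence $\K\cup F$ is an ovoid, uniqueness) is also correct once $|F|=\rho+1$ is known.

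The genuine gap is exactly where you flag it: the lower bound $|F|\ge\rho+1$, equivalently the claim that every uncovered line carries a free point. Your proposed tool --- the two global identities $\sum n_x=s(t+1)|\K|$ and $\sum n_x(n_x-1)=(t+1)|\K|(|\K|-1)$ --- is not strong enough by itself. If one assumes $e_0=0$ and uses the only sign-definite quadratic available, namely $(n_x-1)(n_x-(t+1))\le0$, the three identities collapse to
\[
(t+1)\,d\,[\,d-s(t-1)\,]\le 0,\qquad d:=\rho+1,
\]
which merely says $d\le s(t-1)$; this is satisfied comfortably under the hypothesis $d\le t/s$ and gives no contradiction. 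The ``per-line identity'' you invoke has already been fully exploited in the upper bound and does not feed new information into the global second moment. In short, the second-moment estimate is where the hypothesis $\rho<t/s$ should bite, but in the form you propose it does not; one needs either a third relation (for instance the count $\sum_x r_x(r_x-1)=(t+1)d(t+d-1)$ of intersecting pairs of uncovered lines, where $r_x=t+1-n_x$) or a structural argument on the uncovered lines using the GQ axiom to force a free point on each of them. As written, the proposal is a correct outline with its crucial step missing.
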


Remark that if no ovoids of a particular GQ exist, then
Theorem~\ref{theo1} implies an upper bound on the size of partial
ovoids. The following theorem deals with the limit situation, and will
be of use in Section~\ref{sec:geom}.

\begin{theorem}[\citeihatelatex{2.7.2}{PayneThas1984}]\label{theo2}
Let $\IS$ be a GQ of order $(s,t)$. Let $\K$ be a maximal partial ovoid of size
$st-t/s$ of $\S$. Let $\B'$ be the set of lines incident with no point
of $\K$, and let $\P'$ be the set of points on at least one line of $\B'$ and
let $\I'$ be the restriction of $\I$ to points of $\P'$ and lines of $\B'$. Then
$\S'=(\P',\B',\I')$ is a subquadrangle of order $(s,t/s)$.
\end{theorem}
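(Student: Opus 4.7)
I plan to verify directly that $\S' = (\P', \B', \I')$ satisfies the three GQ axioms with parameters $(s, t/s)$. Two of these are essentially automatic: each $L \in \B'$ is a line of $\S$ and so carries $s+1$ points, all of which lie in $\P'$ by the definition of $\P'$; axiom (ii) is inherited from $\S$. The substantive work is (a) to show every point of $\P'$ is incident with exactly $1 + t/s$ lines of $\B'$, and (b) to verify the GQ axiom (iii) inside $\S'$.

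First I would compute $|\B'|$. Since the $|\K|(t+1)$ lines incident with some point of $\K$ are pairwise distinct (as $\K$ is a partial ovoid), one has $|\B'| = (t+1)(st+1-|\K|) = (t+1)(s+t)/s$. For $p \in \P'$, observe $p \notin \K$, and let $a_p$ denote the number of points of $\K$ collinear with $p$; equivalently, the number of lines through $p$ meeting $\K$, so $p$ lies on $(t+1) - a_p$ lines of $\B'$, and the maximality of $\K$ forces $a_p \geq 1$. Applying axiom (iii) of $\S$ to each $q \in \K$ and a fixed $L \in \B'$ shows that every $q \in \K$ is collinear with a unique point of $L$, whence $\sum_{p \in L} a_p = |\K| = st - t/s$, so the $a_p$ along $L$ have average $t - t/s$.

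The main obstacle is to upgrade this average to the pointwise equality $a_p = t - t/s$ at every $p \in \P'$. I would attack this via a second-moment analysis: double-counting triples $(q_1, q_2, p)$ with distinct $q_1, q_2 \in \K$ both collinear with $p \notin \K$ yields $\sum_{p \notin \K} a_p(a_p - 1) = |\K|(|\K|-1)(t+1)$, since any two non-collinear points in a GQ have exactly $t+1$ common neighbours by axiom (iii). Combining this with $\sum_{p \notin \K} a_p = s|\K|(t+1)$, the observation that $a_p = t+1$ for $p \in \P \setminus (\K \cup \P')$ (no line through such a point avoids $\K$), and the integrality constraint $1 \leq a_p \leq t$ on $\P'$, a Cauchy--Schwarz / variance argument pins down $a_p = t - t/s$ identically on $\P'$ and, correspondingly, shows every point of $\P'$ is incident with exactly $1 + t/s$ lines of $\B'$.

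Once this is established, the incidence count $(s+1)|\B'| = \sum_{p \in \P'}(1 + t/s)$ gives $|\P'| = (s+1)(t+1)$. For axiom (iii) in $\S'$, fix $p \in \P'$ and $L \in \B'$ with $p \notin L$; axiom (iii) in $\S$ produces the unique candidate $M = py$, where $y \in L$ is the unique point of $L$ collinear with $p$. To see $M \in \B'$, I would double-count pairs $(p, M)$ with $p \in \P' \setminus L$ and $M \in \B'$ through $p$ meeting $L$: each such $p$ admits at most one such $M$ (by uniqueness in $\S$), while from the other side each of the $(s+1)t - |\K| = t(s+1)/s$ lines of $\B' \setminus \{L\}$ meeting $L$ contributes $s$ points of $\P' \setminus L$, giving the total $(s+1)t = |\P' \setminus L|$. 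Equality forces existence for every $p \in \P' \setminus L$, completing the verification of axiom (iii) and showing $\S'$ is a subquadrangle of order $(s, t/s)$.
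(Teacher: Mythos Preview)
The paper does not prove Theorem~\ref{theo2}; it is quoted verbatim from Payne--Thas \cite[2.7.2]{PayneThas1984} as a known result, so there is no in-paper argument to compare against.

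That said, your plan has a genuine gap at the step where you claim a Cauchy--Schwarz/variance argument ``pins down $a_p=t-t/s$'' on $\P'$. Set $\alpha=t-t/s$, $\beta=t+1$. Your two moment identities do combine (after subtracting the contribution $a_p=\beta$ from points outside $\P'\cup\K$) to give
\[
\sum_{p\in\P'}(a_p-\alpha)(a_p-\beta)\;=\;\sum_{p\in\P'}b_p\bigl(b_p-(1+t/s)\bigr)\;=\;0,
\]
but the summand is \emph{not} sign-definite on the admissible range $1\le a_p\le t$: it is $\ge 0$ for $a_p\le\alpha$, yet strictly negative for $\alpha<a_p\le t$ (equivalently for $1\le b_p\le t/s$). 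So vanishing of the sum does not force $a_p\equiv\alpha$; positive terms from small $a_p$ could cancel negative ones from $a_p$ near $t$. Equivalently, the variance form of the same computation reads
\[
\sum_{p\in\P'}\Bigl(b_p-\bigl(1+\tfrac{t}{s}\bigr)\Bigr)^{2}\;=\;\frac{(s+t)^2}{s^2}\bigl(|\P'|-(s+1)(t+1)\bigr),
\]
which yields only $|\P'|\ge(s+1)(t+1)$, not the reverse inequality you then use. Some additional geometric input is required---for instance a direct argument that $b_p\ge 1+t/s$ whenever $b_p\ge 1$, which is essentially what the Payne--Thas proof supplies. Once constancy of $b_p$ on $\P'$ is established, your verification of GQ axiom~(iii) via the double count of pairs $(p,M)$ is correct.
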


Consider the parabolic quadric $\q(4,q)$ in the 4-dimensional
projective space $\pg(4,q)$.  This quadric consists of points of
$\pg(4,q)$ that are singular with respect to a non-degenerate
quadratic form on $\pg(4,q)$, which is, up to a coordinate transformation,
unique. 

The points and totally isotropic lines of $\q(4,q)$ constitute
an example of a generalized quadrangle of order $q$.
Any elliptic quadric contained in $\q(4,q)$, obtained from a hyperplane section, is an example of 
an ovoid of $\q(4,q)$. These are the only ovoids when $q$ is a prime, \cite{BallGovaertsStorme}.
When $q=p^h$, $h > 1$, other examples are known, see e.g. \cite{DeBeuleKleinMetsch11} 
for a list of references. 

Applying Theorem~\ref{theo1} to the GQ $\q(4,q)$ implies that a partial ovoid
of size $q^2$ cannot be maximal. In this paper we shall be concerned with the next
case, that of maximal partial ovoids of size $q^2-1$. It is 
shown in \cite{DeBeuleGacs} that maximal partial
ovoids of $\q(4,q)$ of size $q^2-1$ do not exist when $q$ is odd and
not prime. When $q$ is odd and prime, examples of maximal partial
ovoids of this size are known for $q=3,5,7$ and $11$, but none for $q
> 11$, \cite{Penttila}. In this paper we will give detailed descriptions of 
exactly these examples. It was also shown in \cite{DWT2011} that $q=3,5,7$ and $11$ 
are the only values permitted under the additional assumption 
that $(q^2-1)^2$ divides the automorphism group of the maximal partial ovoid.

For the sake of completeness, we mention that for $q$ even and $q > 2$, maximal partial ovoids of size $q^2-1$
are excluded, and more is known, by the following theorem. 
\begin{theorem}[{\cite[Corollary 1]{BDES}}]
Let $\K$ be a maximal partial ovoid of $\q(4,q)$, $q$ even. Then $|\K| \leq q^2-q+1$.
\end{theorem}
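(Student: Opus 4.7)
The plan is to use the isomorphism $\q(4,q) \cong \w(q)$ valid for $q$ even. Regard $\K$ as a set of pairwise non-orthogonal points of $\pg(3,q)$ with respect to an alternating form $B$; maximality then translates to the statement that every point of $\pg(3,q) \setminus \K$ lies in the polar plane $x^\perp$ for some $x \in \K$.

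A standard double count gives that the set of lines of $\q(4,q)$ missing $\K$ has cardinality $\rho(q+1)$, where $\rho = q^2+1-|\K|$. For the upper bound on $|\K|$ I would study the multiplicity function $m(p) = |\{x \in \K : p \in x^\perp\}|$ and apply second-moment inequalities: since $B$ is alternating one has $x \in x^\perp$ while $y \notin x^\perp$ for distinct $x,y \in \K$, so the incidence between $\K$ and polar planes is well controlled. Maximality forces $m(p) \geq 1$ for every $p \notin \K$, and one compares $\sum_p m(p)$ with $\sum_p m(p)^2$, the latter being computable from the pairwise intersections $|x^\perp \cap y^\perp| = q+1$.

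The hard step is that a direct Cauchy--Schwarz calculation in this setup only yields the trivial bound $|\K| \leq q^2+1$ (all the slack is lost to the already-known upper bound $q^2+1$ on ovoid size). To obtain the sharp bound $|\K| \leq q^2-q+1$ one must exploit a feature special to even characteristic. In characteristic $2$ the alternating form $B$ is also symmetric, which allows one to attach a quadratic form $Q$ with $B$ as its polarization, and the associated nucleus supplies extra incidence data tying points of $\pg(3,q)$ to tangent hyperplanes of $\q(4,q)$; equivalently, via the self-duality of $\w(q)$, $\K$ corresponds to a partial spread of $\w(q)$, and one can invoke refined bounds for partial spreads of symplectic type. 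I expect the main obstacle to be identifying the correct even-characteristic identity — most likely a careful count of triple polar-plane intersections, or an algebraic identity derived from $Q$ — that upgrades the second-moment estimate into the required linear bound $|\K| \leq q^2-q+1$.
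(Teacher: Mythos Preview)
The paper does not prove this theorem at all: it is quoted from \cite{BDES} purely ``for the sake of completeness,'' to justify restricting the remainder of the paper to odd $q$. There is therefore no proof in the present paper against which your attempt can be compared.

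As for the proposal itself, it is not a proof but an outline in which the decisive step is explicitly left open. You correctly observe that the naive second-moment count on the multiplicities $m(p)=|\{x\in\K : p\in x^\perp\}|$ recovers only the trivial bound $|\K|\le q^2+1$, and you then \emph{speculate} that some unspecified even-characteristic identity --- a triple polar-plane count, or an identity coming from a compatible quadratic form $Q$ --- will upgrade this to $q^2-q+1$. That missing ingredient is the entire content of the theorem, so nothing has actually been established. For orientation, the argument in \cite{BDES} does not run via polar-plane multiplicities in $\w(3,q)$ at all; it passes through the $T_2(\mathcal O)$ model of $\q(4,q)$ available when $q$ is even (with $\mathcal O$ an oval of $\pg(2,q)$), using self-duality to convert the maximal partial ovoid into a maximal partial spread of $T_2(\mathcal O)$ and then exploiting the affine structure of that model.
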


For the case $q=2$ it is easily seen that here exist maximal partial ovoids of size $q^2-1=3$, see e.g. \cite{Thas2002}.

In Section~\ref{sec:geom} we shall restrict ourselves to $q$ odd, and show that maximal partial ovoids of
size $q^2-1$ can be represented as sharply transitive subsets of the
special linear group $\SL(2,q)$ and show how this representation
naturally leads to a uniform description of the known examples (cf.\
Theorem~\ref{theo-subgroup}). In Section~\ref{sec:spreads} we
illustrate the connection between these partial ovoids and spread
sets, and hence with partial spreads of the symplectic geometry
$\w(3,q)$. Finally, in Section~\ref{sec:rootsys} we present another
(and quite different)
way to construct the known examples, in terms of root systems.

\section{The geometry of Q(4,q) and SL(2,q)}
\label{sec:geom}

{}From now on we shall assume that $q$ is odd.

If a maximal partial ovoid $\O$ of
$\q(4,q)$ has size $q^2-1$, it follows from Theorem~\ref{theo2} that the lines of $\q(4,q)$ not 
meeting $\O$ constitute a subGQ of order $(q,1)$, which is necessarily a hyperbolic quadric $\q^
+(3,q)$ contained in the intersection of a hyperplane $\pi_{\infty}$ and $\q(4,q)$.

It is therefore natural to consider the geometry $\q^*(4,q)$ that
consists of the points of $\q(4,q)$ that do not belong to $\pi_\infty$ (the `affine
points') together with the lines of $\q(4,q)$ that do not lie entirely
inside $\pi_\infty$ and therefore intersect $\pi_\infty$ in exactly
one point (the `affine lines'). There are $q(q^2-1)$ affine
points and $(q+1)(q^2-1)$ affine lines. Each affine line contains $q$ affine points and each affine
point lies on $q+1$ affine lines.

A maximal partial ovoid $\O$ of $\q(4,q)$ of size $q^2-1$ is then
precisely a set of affine points such that each affine line contains
exactly one point of $\O$. Such a set may as well be dubbed an `affine
ovoid'.

Without loss of generality we may choose the equation of $\q(4,q)$ to
be $X_0^2=X_1X_4-X_2X_3$ and the equation of $\pi_\infty$ to be
$X_0=0$. 
With this
notation, the affine points can all be given normalized coordinates
with $X_0=1$, and hence are in one--one correspondence with the
quadruples $(X_1,X_2,X_3,X_4)$ such that $X_1X_4-X_2X_3 = 1$. In other
words, the points of $\q^*(4,q)$ are in one--one correspondence with
the $2\times 2$ matrices of determinant one, i.e., with the elements
of $\SL(2,q)$. It is this correspondence and the group structure of
$\SL(2,q)$ which can be exploited to better understand the `affine
ovoids'. 

We are certainly not the first to identify the points of $\q^*(4,q)$
with group elements of $\SL(2,q)$. Indeed, this representation is
related to the much more general interpretation of so-called
span-symmetric GQs as group coset geometries \cite[Theorem
10.7.8]{PayneThas1984}. Fortunately, for the particular case of
$\q^*(4,q)$ we can derive the necessary properties in a much simpler
setting.

The following lemma shows that there is a wide variety of
ways to express collinearity in $\q^*(4,q)$. We write $I$ for the
$2\times 2$ identity matrix.

\begin{lemma}
\label{lemma-sl2q}
  Let $q=p^h$, $p$ prime, $p$ odd. Let $X,Y\in\SL(2,q)$ such that
  $X\ne Y$. Then the following are equivalent
\begin{compactenum}
\item[(i)] $X$ and $Y$ are collinear in $\q^*(4,q)$,
\item[(ii)] $(1-k) X+k Y\in \SL(2,q)$ for all $k \in \gf(q)$,
\item[(iii)] $(1-k) X+k Y\in \SL(2,q)$ for at least one $k \in \gf(q)-\{0,1\}$,
\item[(iv)] $Y-X$ is singular, i.e., $\det(Y-X)=0$,
\item[(v)] $\Tr XY^{-1} = \Tr Y^{-1}X = 2$,
\item[(vi)]$\Tr YX^{-1} = \Tr X^{-1}Y = 2$, 
\item[(vii)] $XY^{-1}$ (and hence $YX^{-1}$) has multiplicative order $p$,
\item[(viii)] $Y^{-1}X$ (and hence $X^{-1}Y$) has multiplicative order $p$.
\end{compactenum}
\end{lemma}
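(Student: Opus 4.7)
The plan is to establish the chain of equivalences in roughly three stages: first (i)--(iv), which all live inside the quadric/affine geometry; then (v)--(vi), which translate the determinantal condition into a trace condition via inversion; finally (vii)--(viii), which reinterpret the trace condition as an order condition using the structure of unipotent elements of $\SL(2,q)$.

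For (i) $\Leftrightarrow$ (ii), I would use that two affine points are collinear in $\q^*(4,q)$ precisely when the projective line through them lies on the quadric, and in the chosen coordinates this line is parametrised by $(1-k)X + kY$. The key computational observation for (ii) $\Leftrightarrow$ (iii) $\Leftrightarrow$ (iv) is that on the $4$-dimensional space of $2\times 2$ matrices the determinant is a quadratic form whose polarisation satisfies $\det(A+B)+\det(A-B) = 2\det A + 2\det B$, or equivalently $\det(sX+tY) = s^2\det X + st\,b(X,Y) + t^2\det Y$ for a symmetric bilinear $b$. Substituting $\det X=\det Y=1$ gives $\det((1-k)X+kY) = 1 + k(1-k)(b(X,Y)-2)$, so this equals $1$ for all $k$ iff it equals $1$ for some $k\in\gf(q)\setminus\{0,1\}$ iff $b(X,Y)=2$; and $\det(Y-X) = 2 - b(X,Y)$ by the same identity, yielding (iv) as well.

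The bridge to the trace conditions is the identity $XY^{-1}-I = (X-Y)Y^{-1}$, which gives $\det(XY^{-1}-I) = \det(X-Y)$. Since any $M\in\SL(2,q)$ satisfies its characteristic polynomial $M^2 - (\Tr M)M + I = 0$, we get $\det(M-I) = 2-\Tr M$ and $M^{-1} = (\Tr M) I - M$, which yields $\Tr(M^{-1}) = \Tr M$. The first identity turns (iv) into $\Tr XY^{-1} = 2$, the invariance of trace under cyclic permutation turns $\Tr XY^{-1}$ into $\Tr Y^{-1}X$, and combining with $\Tr(M)=\Tr(M^{-1})$ for $M=XY^{-1}$ takes us to $\Tr YX^{-1}=\Tr X^{-1}Y=2$; this settles (iv) $\Leftrightarrow$ (v) $\Leftrightarrow$ (vi).

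Finally, for (v) $\Leftrightarrow$ (vii) the main point—and the step that genuinely uses $q=p^h$—is the characterisation of unipotent elements: an element $M\in\SL(2,q)$ has $\Tr M = 2$ iff its characteristic polynomial is $(t-1)^2$ iff $(M-I)^2=0$ by Cayley--Hamilton, and excluding $M=I$ (guaranteed here since $X\ne Y$) makes $M$ a nontrivial unipotent, hence of order exactly $p$. The converse uses the Frobenius identity $(M-I)^p = M^p - I = 0$ in characteristic $p$, so $M-I$ is nilpotent; for a $2\times 2$ nilpotent matrix this forces $\Tr(M-I)=0$, i.e.\ $\Tr M=2$. The equivalence (vii) $\Leftrightarrow$ (viii) is then immediate from $Y^{-1}X = X^{-1}(XY^{-1})X$, so $XY^{-1}$ and $Y^{-1}X$ are conjugate and have the same order, while $X^{-1}Y$ is the inverse of $Y^{-1}X$. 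The main obstacle is really only this last characteristic-$p$ step; everything else is linear-algebraic bookkeeping.
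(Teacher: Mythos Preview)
Your proof is correct and follows essentially the same architecture as the paper's: compute $\det\bigl((1-k)X+kY\bigr)$ to link (ii)/(iii) with a single scalar condition, identify that scalar with $\Tr XY^{-1}$, tie in (iv), and then use Cayley--Hamilton together with the Frobenius identity $(M-I)^p=M^p-I$ for the order-$p$ characterisation. The differences are purely cosmetic: where the paper exploits the adjoint identity $ZZ^{\#}=(\det Z)I$ to obtain $\det Z=1+k(1-k)(\Tr XY^{-1}-2)$ in one stroke, you reach the same formula via the polarisation $b(\,\cdot\,,\,\cdot\,)$ of $\det$ and only afterwards observe that $b(X,Y)=\Tr XY^{-1}$; and where the paper argues ``$U-I$ singular $\Rightarrow$ $1$ is an eigenvalue $\Rightarrow$ both eigenvalues are $1$ $\Rightarrow$ $\Tr U=2$'', you use the cleaner shortcut $\det(M-I)=2-\Tr M$ coming from evaluating the characteristic polynomial at $1$.
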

\begin{proof}
Write 
$X=\begin{matrix}X_1&X_2\\X_3&X_4\end{matrix}$.
Because $\det X = 1$,
the inverse $X^{-1}$ of $X$ is the same as the adjoint $X^\#$, hence
$X^{-1}=X^\#=\begin{matrix}X_4&-X_2\\-X_3&X_1\end{matrix}$. 

Note that the adjoint operator is linear on $2\times 2$ matrices. Also
the trace of the adjoint of a matrix is the same as the trace of the
original. As a consequence $\Tr M = \Tr M^{-1}$ for all $M\in\SL(2,q)$,
and in particular $\Tr XY^{-1} = \Tr YX^{-1}$ and this again is equal
to $\Tr Y^{-1}X = \Tr X^{-1}Y$.

Consider $Z=(1-k)X + kY$ for some $k\in\gf(q)$. We have $Z^\# = 
(1-k)X^\# + kY^\# = (1-k)X^{-1} + kY^{-1}$ and therefore
\begin{eqnarray*}
(\det Z) I = ZZ^\# &=& [(1-k)X + kY][(1-k)X^{-1} + kY^{-1}]\\
&=&(1-k)^2 I + k(1-k)XY^{-1} + k(1-k)YX^{-1} + k^2 I \\
&=& (1-2k+2k^2)I + k(1-k)(XY^{-1}+YX^{-1}).
\end{eqnarray*}
Taking the trace of both sides of this equation and dividing by 2, yields
$\det Z = 1-2k+2k^2 + k(1-k)\Tr XY^{-1}$. Hence $\det Z = 1$ if and
only if $k(1-k) (\Tr XY^{-1} - 2) = 0$.

The matrix $Y-X$ is singular if and only if $YX^{-1}-1$ is
singular. Write $U = YX^{-1}$, $t = \Tr U$. Note that $\det U=1$. $U$
satisfies its own characteristic equation, and therefore $U^2-tU+1 =
0$. If $t=2$, this means $(U-1)^2 = 0$ and hence $U-1$ is singular.
Conversely, $U-1$ is singular if and only if $1$ is an eigenvalue of
$U$. Because $\det U=1$, this means that also the other eigenvalue of
$U$ is $1$ and hence $\Tr U$ equals the sum of the eigenvalues, which
is $2$.

Finally, for any $M\in\SL(2,q)$ we have $(M-I)^p = M^p-I$. Hence, if
$M^p=I$, then the minimal polynomial of $M$ must divide $(x-1)^p$ and
be either $x-1$, in which case $M=I$, or $(x-1)^2 = x^2 -2x + 1$, and
then $\Tr M = 2$. Conversely, if $\Tr M=2$, then $(M-I)^2=0$ and hence
also $(M-I)^p=0$.
\end{proof}

The interpretation of the combinatorial problem of partial ovoids as
subsets of group elements immediately provides us with natural
examples of affine ovoids in the following theorem.
\begin{theorem}
\label{theo-subgroup}
Let $G$ denote a subgroup of $\SL(2,q)$ of order $q^2-1$. Then $G$ is
an affine ovoid of $\q^*(4,q)$.
\end{theorem}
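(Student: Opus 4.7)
The plan is to combine the collinearity criterion from Lemma~\ref{lemma-sl2q} with Lagrange's theorem. By the equivalences (i) and (vii) of the lemma, two distinct elements $X,Y\in\SL(2,q)$, viewed as affine points of $\q^*(4,q)$, are collinear in $\q^*(4,q)$ if and only if $XY^{-1}$ has multiplicative order $p$. So to prove that the subgroup $G\leq\SL(2,q)$ of order $q^2-1$ consists of pairwise non-collinear affine points, it suffices to show that $G$ contains no element of order $p$.

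This is immediate from Lagrange: the order of every element of $G$ divides $|G|=q^2-1=(q-1)(q+1)$, and since $q=p^h$ we have $q^2-1\equiv -1\pmod p$, so $\gcd(q^2-1,p)=1$. Hence no non-identity element of $G$ has order $p$. For any two distinct $X,Y\in G$, the quotient $XY^{-1}$ lies in $G\setminus\{I\}$, so by the above it does not have order $p$, and therefore $X$ and $Y$ are not collinear in $\q^*(4,q)$.

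It remains to upgrade ``pairwise non-collinear set of size $q^2-1$'' to ``affine ovoid'', which is a one-line counting argument. Counting incidences between $G$ and the affine lines from the point side gives $(q^2-1)(q+1)$, since each affine point lies on $q+1$ affine lines; on the other hand, since no two points of $G$ share an affine line, each of the $(q+1)(q^2-1)$ affine lines carries at most one point of $G$. These two numbers coincide, so every affine line meets $G$ in exactly one point, which is precisely the definition of an affine ovoid.

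There is no real obstacle: the only substantive input is the equivalence (i)$\Leftrightarrow$(vii) of Lemma~\ref{lemma-sl2q}, which converts the geometric collinearity condition into a purely group-theoretic one to which Lagrange's theorem applies directly.
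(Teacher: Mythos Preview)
Your proof is correct and follows the same approach as the paper: use Lemma~\ref{lemma-sl2q} (i)$\Leftrightarrow$(vii) together with Lagrange's theorem to rule out collinearity within $G$. The only difference is that you make explicit the counting step from ``pairwise non-collinear of size $q^2-1$'' to ``every affine line meets $G$ exactly once'', which the paper leaves implicit.
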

\begin{proof}
  Let $X,Y\in G$, $X\ne Y$. Because $p$ does not divide the order of
  $G$, no element of $G$ can have order $p$. In particular $XY^{-1}$
  cannot have order $p$ and therefore by Lemma~\ref{lemma-sl2q} $X$
  and $Y$ cannot be collinear.
\end{proof}

The subgroup structure of $\SL(2,q)$ is well known \cite[Chapter 3,\oldS 6]{Suzuki}. When
$q$ is odd, $\SL(2,q)$ contains a subgroup of order
$q^2-1$ if and only if $q$=3, 5, 7 or 11, as listed in the following
table
\begin{center}
\begin{tabular}{c|c}
Group & Subgroup \\
\hline
SL(2,3) & $Q_8$\\
SL(2,5) & SL(2,3) = $2\cdot A_4$\\
SL(2,7) & GL(2,3) = $2\cdot S_4$\\
SL(2,11)& SL(2,5)
\end{tabular}
\end{center}
As mentioned before, these are the only known examples of affine
ovoids to date (up to
equivalence, cf.\ below).

We conjecture that also the converse of Theorem~\ref{theo-subgroup} is
true --- that every affine ovoid must be (equivalent to) a subgroup of
$\SL(2,q)$, and hence that all affine ovoids are already known.

Two subsets of $\q(4,q)$ are called equivalent if there exists an
automorphism of the generalized quadrangle $\q(4,q)$ that maps the one
set to the other. The following lemma describes some of these
automorphisms that also leave the hyperplane $\pi_\infty$ invariant.

\begin{lemma}
\label{lemma-auto}
Consider $M,N\in \gl(2,q)$ such that $\det M = \det N$. Let $\sigma$ be a
field automorphism of $\gf(q)$. Then the following maps are
automorphisms of the geometry $\q^*(4,q)$.
\begin{equation}
\label{eq-auto}
X \mapsto MX^\sigma N^{-1},\quad
X \mapsto M(X^{-1})^\sigma N^{-1}
\end{equation}
\end{lemma}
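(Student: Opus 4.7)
My plan is to verify two properties for each of the two maps in \eqref{eq-auto}: first, that it restricts to a bijection of $\SL(2,q)$ (so it permutes the affine points of $\q^*(4,q)$), and second, that it preserves collinearity of distinct affine points. Because $\q^*(4,q)$ inherits its incidence from the generalized quadrangle $\q(4,q)$, any two collinear affine points determine a unique line of $\q(4,q)$, and that line is automatically affine because it already contains at least two points off $\pi_\infty$. Hence a collinearity-preserving bijection of the affine points must carry affine lines to affine lines, which is exactly what an automorphism of $\q^*(4,q)$ does.

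For the point-set part, the key observation is that for $X\in\SL(2,q)$ both $X^\sigma$ and $(X^{-1})^\sigma$ have determinant $1$, so for either map the image has determinant $\det M\cdot 1\cdot(\det N)^{-1}=1$ by the hypothesis $\det M=\det N$. Bijectivity is immediate, since $M$, $N$, the Frobenius $\sigma$ and the inversion map on $\SL(2,q)$ are all bijections.

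For collinearity I would invoke characterization (iv) of Lemma~\ref{lemma-sl2q}: distinct $X,Y\in\SL(2,q)$ are collinear in $\q^*(4,q)$ iff $\det(Y-X)=0$. For the first map in \eqref{eq-auto} the difference of the images equals $M(Y-X)^\sigma N^{-1}$, whose determinant is $(\det M)(\det N)^{-1}(\det(Y-X))^\sigma=(\det(Y-X))^\sigma$, and this vanishes iff $\det(Y-X)$ does. For the second map the corresponding difference is $M(Y^{-1}-X^{-1})^\sigma N^{-1}$; here the short identity $Y^{-1}-X^{-1}=-X^{-1}(Y-X)Y^{-1}$, combined with $\det X=\det Y=1$, gives $\det(Y^{-1}-X^{-1})=\det(Y-X)$, so the same conclusion follows.

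There is no real obstacle: once characterization (iv) of Lemma~\ref{lemma-sl2q} is in hand, both maps become manifest automorphisms after a one-line determinantal calculation. The only item worth flagging is the little identity $Y^{-1}-X^{-1}=-X^{-1}(Y-X)Y^{-1}$, which is what transfers the verification for the second map back to that for the first.
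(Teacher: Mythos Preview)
Your proof is correct and follows essentially the same outline as the paper: first check that the image of a determinant-$1$ matrix again has determinant $1$, then invoke Lemma~\ref{lemma-sl2q} to verify that collinearity is preserved. The only difference is cosmetic: the paper uses the trace criterion (v)--(vi), observing that $(\text{image of }X)(\text{image of }Y)^{-1}$ is conjugate to $(XY^{-1})^\sigma$ or $(X^{-1}Y)^\sigma$ so its trace is $(\Tr XY^{-1})^\sigma$ or $(\Tr YX^{-1})^\sigma$, whereas you use the determinant criterion (iv) together with the identity $Y^{-1}-X^{-1}=-X^{-1}(Y-X)Y^{-1}$. Both calculations are one line; the trace version avoids the auxiliary identity, while your version arguably makes the singularity of the difference more transparent.
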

\begin{proof}
For each of the maps in (\ref{eq-auto}) the image of
a matrix $X$ of determinant 1 again has determinant 1. Each map therefore preserves
 the point set of $Q^*(4,q)$. Also, $\Tr XY^{-1}$ is
mapped to either $(\Tr XY^{-1})^\sigma$ or $(\Tr YX^{-1})^\sigma$, and
hence by Lemma~\ref{lemma-sl2q} ({\it v--vi}), collinearity is
preserved.
\end{proof}

In particular, multiplication on the left or right by a fixed element
of $\SL(2,q)$ is an automorphism of the geometry. And hence, from
Theorem~\ref{theo-subgroup} it follows that not only every subgroup
$G$ of the appropriate size, but also every coset of that group, is an affine
ovoid of $\q^*(4,q)$, be it equivalent to $G$.

If we are only interested in point subsets $\O$ up to equivalence,
then it follows from Lemma~\ref{lemma-auto} that we may as well
assume that $I\in\O$. Moreover, if $X \in \O \setminus \{I,-I\}$ has $\Tr X =
t$, then we may always find an automorphism of type (\ref{eq-auto}) with
$M=N$ that leaves $I$ invariant, and maps $X$ to a chosen matrix of
trace $t$ (and determinant 1) different from $\pm I$.

This technique can be used to describe the (affine) lines in an
elegant way.
\begin{lemma}
The lines of $\q^*(4,q)$ are precisely the cosets of the Sylow
$p$-subgroups of $\SL(2,q)$. 
\end{lemma}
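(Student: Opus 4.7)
The plan is to use parts (ii) and (vii) of Lemma \ref{lemma-sl2q} to translate the notion of affine line into a purely group-theoretic statement. A preliminary cardinality check is reassuring: since $|\SL(2,q)| = q(q^2-1)$, a Sylow $p$-subgroup has order $q$, and the $q+1$ such subgroups together give $(q+1)(q^2-1)$ cosets, which matches the number of affine lines counted earlier. It therefore suffices to show that every affine line is a coset of some Sylow $p$-subgroup.

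For the forward direction, fix an affine line $L$ and two distinct points $X, Y \in L$. By Lemma \ref{lemma-sl2q}(ii), $L = \{(1-k)X + kY : k \in \gf(q)\}$. Right-multiplication by $X^{-1}$ sends $L$ to $\{I + kN : k \in \gf(q)\}$, where $N := YX^{-1} - I$. Lemma \ref{lemma-sl2q}(vii) gives that $YX^{-1}$ has order $p$, so $N^2 = 0$, and the identity $(I+kN)(I+\ell N) = I+(k+\ell)N$ exhibits this image as a subgroup $P$ of $\SL(2,q)$ of order $q$, hence a Sylow $p$-subgroup. Thus $L = PX$.

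For the converse, given a Sylow $p$-subgroup $P$ and a coset $PX$, pick any $U \in P \setminus \{I\}$ and set $N := U - I$; as in the proof of Lemma \ref{lemma-sl2q} one has $N^2 = 0$. The subset $\{I+kN : k\in\gf(q)\}$ is a subgroup of $\SL(2,q)$ of order $q$ contained in $P$, hence equals $P$; and then $PX = \{(1-k)X + k(UX) : k \in \gf(q)\}$, which by Lemma \ref{lemma-sl2q}(ii) is the affine line through $X$ and $UX$.

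The one point demanding real care is the containment $\{I+kN : k \in \gf(q)\} \subseteq P$ when $q$ is a proper prime power, since then $P$ is not cyclic and this set is strictly larger than $\langle U \rangle$. This is handled by observing that each non-identity $I+kN$ is unipotent with the same fixed eigenspace $\ker N$ as $U$, and so lies in the unique Sylow $p$-subgroup of $\SL(2,q)$ stabilising that line, which must be $P$. This uniqueness of the Sylow $p$-subgroup through a given unipotent element is the only structural fact about $\SL(2,q)$ the argument uses beyond what is already in Lemma \ref{lemma-sl2q}.
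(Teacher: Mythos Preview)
Your argument is correct and takes a genuinely different route from the paper's. The paper first invokes Lemma~\ref{lemma-auto}: right translation moves any line to one through $I$, and then conjugation (using the remark just before the lemma about matrices of equal trace) moves that line to the one through $I$ and $M(1)=\begin{matrix}1&1\\0&1\end{matrix}$; an explicit coordinate computation then identifies this line with the upper unitriangular Sylow $p$-subgroup. Your proof instead stays coordinate-free and uses only Lemma~\ref{lemma-sl2q}: from $(YX^{-1}-I)^2=0$ you recognise $LX^{-1}=\{I+kN:k\in\gf(q)\}$ directly as a subgroup of order $q$, without ever normalising to a canonical line or appealing to Lemma~\ref{lemma-auto}.

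What each approach buys: the paper's reduction to a single explicit case keeps the group theory to a minimum (no nilpotency, no Sylow structure beyond the definition), at the cost of depending on the automorphism lemma. Your argument is more intrinsic and arguably cleaner for the forward direction, but your direct proof of the converse needs the extra structural fact that distinct Sylow $p$-subgroups of $\SL(2,q)$ intersect trivially. Since your opening cardinality count already makes the converse redundant once the forward direction is established (the map ``line $\mapsto$ coset it equals'' is injective between sets of the same size $(q+1)(q^2-1)$), you could drop the third and fourth paragraphs entirely and avoid that appeal.
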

\begin{proof}
By the above, it is sufficient to prove that a line that contains $I$
is a Sylow $p$-subgroup of $\SL(2,q)$. Moreover, without loss of
generality we may assume that the line contains the matrix $M(1)
\isdef \begin{matrix}1&1\\0&1 \end{matrix}$. It follows that the
points of the line are of the form $M(k) \isdef (1-k) I + k
M(1)$ with $k\in\gf(q)$. We have  $M(k) =
\begin{matrix}1&k\\0&1 \end{matrix}$ and the set $\{ M(k) | k \in
\gf(q)\}$ is a Sylow $p$-subgroup of $\SL(2,q)$, isomorphic to the
additive group of the field $\gf(q)$.
\end{proof}

The next result shows that affine ovoids are connected to so-called transitive subsets of
$\SL(2,q)$. A subset $S$ of $\SL(2,q)$ is called \emph{transitive} if
and only if for any two non-zero vectors $u,v\in \gf(q)^2$ there exist
an element $X\in S$ such that $uX = v$. $S$ is \emph{sharply transitive}
if and only if the element $X$ is always unique.

\begin{theorem}\label{th:sharply}
Let $q$ be odd. Let $\O\subseteq \SL(2,q)$. Then $\O$ is an affine
ovoid of $\q^*(4,q)$ if and only if $\O$ is a sharply transitive
subset of $\SL(2,q)$.
\end{theorem}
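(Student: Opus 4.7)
The plan is to translate both conditions into the same concrete statement about the family of maps $\phi_u: \O \to \gf(q)^2\setminus\{0\}$, $X \mapsto uX$, indexed by nonzero vectors $u\in\gf(q)^2$, and then use a counting argument to finish.

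First I would record the geometric translation of collinearity. By Lemma~\ref{lemma-sl2q}(iv), two distinct $X,Y \in \SL(2,q)$ are collinear in $\q^*(4,q)$ if and only if $Y-X$ is singular, equivalently (for $2\times2$ matrices) if and only if there exists a nonzero row vector $u\in\gf(q)^2$ with $u(Y-X)=0$, i.e.\ $uX=uY$. So a subset $\O$ of $\SL(2,q)$ has no two collinear points precisely when, for every nonzero $u$, the map $\phi_u$ is injective on $\O$.

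For the forward direction, suppose $\O$ is an affine ovoid. Then $|\O|=q^2-1$ and no two elements of $\O$ are collinear, so every $\phi_u$ is an injection from $\O$ into $\gf(q)^2\setminus\{0\}$. Since the two sets have the same cardinality $q^2-1$, each $\phi_u$ is a bijection. That is exactly the statement that for every pair of nonzero vectors $u,v$, a unique $X\in\O$ satisfies $uX=v$, so $\O$ is sharply transitive.

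For the reverse direction, suppose $\O$ is sharply transitive. Fixing any nonzero $u$, the bijectivity of $\phi_u$ forces $|\O|=q^2-1$. If two distinct $X,Y\in\O$ were collinear in $\q^*(4,q)$, then by the translation above some nonzero $u$ would give $uX=uY$, contradicting the injectivity of $\phi_u$. Hence $\O$ consists of $q^2-1$ pairwise non-collinear affine points. It remains to upgrade this to an affine ovoid, which is a counting step: each affine point lies on $q+1$ affine lines, so the $q^2-1$ points of $\O$ account for $(q^2-1)(q+1)$ incidences, and since no two points share an affine line these incidences involve $(q^2-1)(q+1)$ distinct affine lines; but that is the total number of affine lines of $\q^*(4,q)$, so every affine line contains exactly one point of $\O$.

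The proof is essentially book-keeping once Lemma~\ref{lemma-sl2q} is in hand; the only place where care is needed is making the equivalence \emph{singular $\Longleftrightarrow$ kills some nonzero (row) vector} explicit, and keeping the left/right action convention consistent with the definition of transitivity given just before the theorem. The numerical coincidence $|\O|=|\gf(q)^2\setminus\{0\}|=q^2-1$ is what makes injectivity of each $\phi_u$ upgrade automatically to bijectivity, so no further work is required in that step.
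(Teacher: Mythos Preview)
Your proof is correct and follows essentially the same route as the paper's: both directions rest on Lemma~\ref{lemma-sl2q}(iv) together with the numerical coincidence $|\O|=q^2-1=|\gf(q)^2\setminus\{0\}|$, which turns injectivity of each $\phi_u$ into bijectivity. You are in fact slightly more careful than the paper in the sharply transitive $\Rightarrow$ affine ovoid direction, where you include the incidence count showing that every affine line is actually hit---a step the paper leaves implicit.
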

\begin{proof}
Let $\O$ be sharply transitive and take $X,Y\in\O$, $X\ne Y$. Then,
for any $u\in\gf(q)^2$, $uX$ and $uY$ must differ, and hence
$u(X-Y)\ne 0$, for all $u\ne (0,0)$. It follows that $X-Y$ is non-singular, and hence $X,Y$
are never collinear, by Lemma~\ref{lemma-sl2q}, making $\O$ an affine
ovoid.

Conversely, let $\O$ be an affine ovoid. For every $X,Y\in \O$, $X-Y$
is non-singular, and hence the set $V = \{uX | X\in\O\}$ has size
$|\O|$. Because $|\O|\ge q^2-1$ the set $V$ must contain every non-zero
vector of $\gf(q)^2$. Therefore $\O$ is transitive, and because $|\O|$
is exactly $q^2-1$, it is even sharply transitive.
\end{proof}

For $t\in \gf(q)$ we define the \emph{discriminant} $\delta(t)$, as follows:
\[
\delta (t) = \left\{\begin{array}{ll}
-1,\quad & \mbox{when $t^2-4$ is not a square in $\gf(q)$,}\\
0,& \mbox{when $t^2- 4 = 0$,}\\
1,& \mbox{when $t^2- 4$ is a non-zero square in $\gf(q)$.}
\end{array}\right.
\]
The quadratic equation $\lambda^2 -t \lambda + 1 = 0$ has exactly $1 + \delta(t)$
solutions for $\lambda\in \gf(q)$. (This is the characteristic
equation of a matrix $X\in\SL(2,q)$ with $\Tr X = t$.)

\begin{proposition}
\label{prop-t}
Let $t\in \gf(q)$. Let $S_t$ denote the set  of all
elements $X$ of $\SL(2,q)$ such that $\Tr X = t$. Then
$S_t = H_t \setminus \pi_\infty$ where $H_t$ is a hyperplane section
of $\q(4,q)$ whose type depends on $\delta(t)$ as follows:
\begin{compactenum}[\rm (i)]
\item If $\delta(t)=-1$, then $H_t$ is an elliptic
  quadric of type $Q^-(3,q)$.
\item If $\delta(t) = 1$, then $H_t$ is a hyperbolic
  quadric of type $Q^+(3,q)$. 
\item If $\delta(t) = 0$, then $H_t$ is a quadratic cone.
\end{compactenum}
In all cases $|S_t|=q(q+\delta(t))$. 
\end{proposition}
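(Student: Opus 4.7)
The plan is to identify $S_t$ as the affine part of an explicit hyperplane section of $\q(4,q)$, read off the projective type of that section from a completed-square form, and then count.

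First I would observe that with coordinates as fixed above, an affine point $X\in\SL(2,q)$ has $X_0=1$, and the condition $\Tr X = X_1+X_4 = t$ is exactly the dehomogenization of the linear equation $X_1+X_4-tX_0=0$. Writing $\pi_t$ for this hyperplane of $\pg(4,q)$ and $H_t = \q(4,q)\cap \pi_t$, one automatically has $S_t = H_t\setminus\pi_\infty$, which handles the qualitative half of the statement.

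Second, to identify the type of $H_t$, I would substitute $X_4 = tX_0-X_1$ into the quadratic form $X_0^2 - X_1X_4 + X_2X_3$ and complete the square (legal since $p$ is odd). The induced form on $\pi_t\cong\pg(3,q)$ then splits orthogonally as the hyperbolic plane $X_2X_3$ together with a binary piece $Y^2 + \frac{4-t^2}{4}X_0^2$, where $Y=X_1-\frac{t}{2}X_0$. Three cases now arise: if $t^2=4$ the binary piece is a pure square, the rank drops to $3$, and $H_t$ is a quadratic cone (with vertex at the pole of $\pi_t$ with respect to $\q(4,q)$); if $t^2\ne 4$ the form is non-degenerate, and the binary piece is isotropic iff $-(4-t^2)/4=(t^2-4)/4$ is a non-zero square, i.e.\ iff $\delta(t)=1$, in which case $H_t=Q^+(3,q)$, while $\delta(t)=-1$ forces $H_t=Q^-(3,q)$. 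The one nontrivial ingredient is the standard fact that a binary form $aY^2+bZ^2$ over $\gf(q)$ is hyperbolic iff $-ab$ is a square; everything else is bookkeeping.

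Third, for the cardinality I would compute $|H_t\cap\pi_\infty|$ once and for all: setting $X_0=0$ in the two defining equations forces $X_4=-X_1$ and $X_1^2+X_2X_3=0$, a non-singular conic of $q+1$ points in the plane at infinity of $\pi_t$, independently of $t$. Subtracting $q+1$ from the textbook cardinalities $q^2+1$, $(q+1)^2$, and $q^2+q+1$ (for $Q^-(3,q)$, $Q^+(3,q)$, and a $\pg(3,q)$-cone respectively) yields $q(q-1)$, $q(q+1)$, and $q^2$, uniformly $q(q+\delta(t))$. An equally short alternative is a direct count that parametrises $S_t$ by $X_1$, using that $\lambda^2-t\lambda+1=0$ has exactly $1+\delta(t)$ roots to handle the degenerate fibre $X_2X_3 = X_1(t-X_1)-1 = 0$. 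I expect step two to be the only delicate step: the signs in the square-completion must be tracked carefully and the binary-form classification invoked correctly.
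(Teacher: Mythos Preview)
Your proposal is correct and follows essentially the same strategy as the paper: identify $S_t$ as the affine part of the hyperplane section $H_t$, determine the projective type of $H_t$ from the induced quadratic form, and subtract the conic at infinity to get $|S_t|$. The only cosmetic difference is in step two: the paper computes the $4\times4$ determinant of the associated bilinear form and reads off the type from its square class, whereas you complete the square to exhibit the orthogonal splitting $(X_2X_3)\perp\bigl(Y^2+\tfrac{4-t^2}{4}X_0^2\bigr)$ explicitly and invoke the binary-form criterion. Both routes encode the same invariant and lead to the same trichotomy; your version is slightly more hands-on, the paper's slightly more compact.
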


\begin{proof}
Let $X=\begin{matrix}
X_1 & X_2 \\ X_3 & X_4 \\
\end{matrix}$ as before. The condition $\Tr X = t$ translates to $X_1
+ X_4 = t$, or in projective coordinates, $X_1 + X_4 = tX_0$  which is
the equation of a hyperplane $H_t$ of $\pg(4,q)$. Such a hyperplane
intersects $\q(4,q)$ in either a cone, an elliptic or a hyperbolic  
quadric. It remains to determine which values of $t$ lead to which type
of intersection.

Combining the equation of $H_t$ with the equation of $\q(4,q)$ yields
\[
X_1(tX_0 -X_1) - X_2X_3 - X_0^2 = 0.
\]
The corresponding quadratic form has the following associated determinant~:
\[
\begin{determ}
-1 & t/2 & 0 & 0 \\
t/2 & -1 & 0 & 0 \\
0 & 0 & 0 & -1/2 \\
0 & 0 & -1/2 & 0
\end{determ}
= (1 - t^2/4)(1/4) = \frac1{16}\delta (t).
\]
The type of the hyperplane section is determined by whether this
determinant is a square, a non-square or zero, yielding the
classification in the statement of this theorem.

It remains to determine the value of $|S_t|$. Note that the size of
the hyperplane section is equal to $q^2+1$, $q^2+q+1$ or $(q+1)^2$
depending on the type of the hyperplane section, i.e., equal to
$q^2+q+1 + \delta(t)q$. From this size we need to subtract the size of
the intersection $S_t\cap \pi_\infty$. This intersection consists of
the points satisfying $X_0=0$ and $-X_1^2 - X_2X_3 = 0$, i.e., a
non-degenerate conic. A conic has $q+1$ points, and therefore $|S_t| = q^2+q+1
+\delta(t) q - (q+1) = q(q+\delta(t))$.
\end{proof}

\begin{lemma}
\label{lemma-count}
Let $\O$ denote an affine ovoid of $\Q^*(4,q)$. Then
\begin{compactenum}[\rm (i)]
\item $\O$ either contains $I$ or else exactly $q+1$ elements of
trace $2$ different from $I$,
\item $\O$ either contains $-I$ or else exactly $q+1$ elements of
trace $-2$ different from $-I$, 
\item $\O$ contains exactly $q+1$ points of trace $t$, for every $t$
such that $\delta(t)>0$.
\end{compactenum}
 Every point of $\Q^*(4,q)$ outside $\O$ 
is collinear with exactly $q+1$ points of $\O$.
\end{lemma}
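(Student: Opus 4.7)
The plan is to leverage Lemma~\ref{lemma-sl2q}(v), which translates collinearity into a trace condition: distinct $X,Y\in\SL(2,q)$ are collinear in $\q^*(4,q)$ iff $\Tr XY^{-1} = 2$. Specialising $Y = I$ gives that $X\neq I$ is collinear with $I$ iff $X\in S_2\setminus\{I\}$; specialising $Y = -I$ yields the analogue for $-I$ and $S_{-2}\setminus\{-I\}$. Combined with the fact that every affine point lies on $q+1$ affine lines, this shows that the $q+1$ affine lines through $I$ lie entirely in $\{I\}\cup S_2$ and partition $S_2\setminus\{I\}$ (distinct lines through $I$ meet only at $I$; every element of $S_2\setminus\{I\}$ is collinear with $I$ hence lies on some such line).

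For~(i) I would distinguish two cases. If $I\in\O$, any other trace-$2$ element of $\O$ would be collinear with $I$, contradicting that $\O$ is a partial ovoid, so $\O\cap(S_2\setminus\{I\})=\emptyset$. If $I\notin\O$, each of the $q+1$ affine lines through $I$ meets $\O$ in exactly one point, necessarily in $S_2\setminus\{I\}$; the $q+1$ intersection points are pairwise distinct, and no further trace-$2$ element of $\O$ exists because every such element lies on one of these lines. Item~(ii) is proved by repeating this argument with $-I$ in place of $I$, or more cleanly by applying the automorphism $X\mapsto -X$ of Lemma~\ref{lemma-auto} (with $M=-I$, $N=I$) to reduce it to~(i).

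For~(iii), with $\delta(t)>0$, Proposition~\ref{prop-t} identifies $H_t$ with a hyperbolic quadric $\q^+(3,q)$, which is a subGQ of $\q(4,q)$ of order $(q,1)$. Since $H_t\cap\pi_\infty$ is a nondegenerate conic and so contains no line, each of the $2(q+1)$ generators of $H_t$ is an affine line of $\q^*(4,q)$ and therefore meets $\O$ in exactly one point, which lies in $S_t$. A double count of incidences between points of $\O\cap S_t$ and generators of $H_t$ (each point of $H_t$ lies on two generators) gives $2\,|\O\cap S_t|=2(q+1)$, hence $|\O\cap S_t|=q+1$.

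The final assertion is essentially the GQ axioms in disguise: any $P\in\q^*(4,q)\setminus\O$ lies on $q+1$ lines of $\q(4,q)$, all affine (no line through an affine point is contained in $\pi_\infty$), each meeting $\O$ in exactly one point; these $q+1$ intersection points are distinct because distinct lines through $P$ share only $P$, which is not in $\O$. The only step I expect to require real care is~(iii), where one must verify that no generator of the hyperbolic section $H_t$ lies in $\pi_\infty$; this is secured by the nondegeneracy of the conic $H_t\cap\pi_\infty$.
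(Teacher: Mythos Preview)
Your proof is correct and follows essentially the same route as the paper's: the cone structure of $H_{\pm 2}$ for parts (i)--(ii), and the ruling of the hyperbolic section $H_t$ for part (iii). The only notable differences are cosmetic: for (iii) you double-count over both reguli whereas the paper simply picks one regulus as a partition of $H_t$, and for the final assertion you argue directly from the $q+1$ affine lines through an arbitrary $P\notin\O$, while the paper first normalises $P$ to $I$ via Lemma~\ref{lemma-auto} and then invokes part (i). Your direct argument is slightly cleaner since it avoids appealing to the transitivity of the automorphism group.
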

\begin{proof}
By Proposition~\ref{prop-t} (iii), the points of trace $2$ consist of $q+1$
(affine) lines through the common point $I$. Because $\O$ is an affine ovoid,
each of these lines must contain exactly one point of $\O$. Either
this is the point common to all these lines, i.e., $I$, or else a
different point for each line. This proves the first part of this
lemma. The second part is proved in the same way, by considering the
points of trace $-2$ instead of $2$.

Now, let $t$ be such that $\delta(t) > 0$. By Proposition~\ref{prop-t}
(i) $H_t$ is a hyperboloid and its point can therefore be partitioned
into $q+1$ lines (in two different ways). Each of these lines must
contain exactly one point of $\O$.

Finally, consider a point outside $\O$. Without loss of generality we
may assume this point to be $I$. The first part of this lemma
then proves that this point is collinear to exactly $q+1$ points of
$\O$.
\end{proof}

\begin{lemma}
\label{lemma-antipodal}
Let $A,B\in \SL(2,q)$ such that $A\ne B$. Then $A$ and $B$ are
collinear to the same points of $\pi_\infty$ if and only if $A = -B$.
\end{lemma}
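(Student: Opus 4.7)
The plan is to translate the condition into one about tangent hyperplanes. For an affine point $A\in\q^*(4,q)$, the points of $\q(4,q)$ collinear with $A$ are exactly those of $T_A\cap\q(4,q)$, where $T_A$ is the tangent hyperplane to $\q(4,q)$ at $A$ (indeed, for $P\in\q(4,q)$ the line $AP$ lies on $\q(4,q)$ iff the bilinear form $B(A,P)$ vanishes, iff $P\in T_A$). Restricting to $\pi_\infty$, the points of $\pi_\infty$ collinear with $A$ are precisely the points of the plane $\Pi_A\isdef T_A\cap\pi_\infty$ of $\pi_\infty\cong\pg(3,q)$ lying on the hyperbolic quadric $\q^+(3,q)=\pi_\infty\cap\q(4,q)$.

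Next I would argue that $\Pi_A$ is already determined by this intersection. A plane of $\pg(3,q)$ meets $\q^+(3,q)$ in either a non-degenerate conic or in a pair of intersecting lines (the tangent case); in either case the intersection spans the plane, so the assumption $T_A\cap\pi_\infty\cap\q(4,q)=T_B\cap\pi_\infty\cap\q(4,q)$ already forces $\Pi_A=\Pi_B$.

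Finally I would read off $\Pi_A$ in coordinates. Polarising $X_0^2=X_1X_4-X_2X_3$ shows that, for $A=(1:A_1:A_2:A_3:A_4)$, the hyperplane $T_A$ has equation
\[
-2X_0+A_4X_1-A_3X_2-A_2X_3+A_1X_4=0,
\]
so $\Pi_A$ is cut out in $\pi_\infty$ by $A_4X_1-A_3X_2-A_2X_3+A_1X_4=0$. Thus $\Pi_A=\Pi_B$ amounts to $(A_1,A_2,A_3,A_4)=\lambda(B_1,B_2,B_3,B_4)$ for some $\lambda\in\gf(q)^*$; combined with $\det A=\det B=1$ this forces $\lambda^2=1$, so $\lambda=\pm 1$. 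Since $A\ne B$, the only remaining possibility is $A=-B$, and the converse is immediate from the displayed formula for $\Pi_A$.

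The only genuine obstacle is the span step of the second paragraph, but it is routine: a non-degenerate conic of $q+1$ points, or a pair of concurrent lines, cannot be contained in any proper subspace of its ambient plane, so $\Pi_A$ is recovered as the projective span of $\Pi_A\cap\q^+(3,q)$.
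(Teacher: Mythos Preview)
Your proof is correct and follows essentially the same approach as the paper's: both compute the tangent hyperplane $T_A$, intersect it with $\pi_\infty$, and observe that the coefficient vector of the resulting plane is projectively $(A_1,A_2,A_3,A_4)$, so that $\det A=\det B=1$ forces $A=\pm B$. You are in fact slightly more careful than the paper in justifying that $\Pi_A$ is determined by $\Pi_A\cap\q^+(3,q)$ (the span step), which the paper's proof simply elides.
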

\begin{proof}
Let $A=\begin{matrix}A_1&A_2\\A_3&A_4\end{matrix}$, 
$B=\begin{matrix}B_1&B_2\\B_3&B_4\end{matrix}$. The points 
$(X_0,X_1,\ldots, X_4)$ of
$\pi_\infty$ that are collinear to $A$ (resp.\ $B$) are the points of the
3-dimensional subspace of $\pi_\infty$ with equation $X_0=0$ and $A_2X_1 +
A_1X_2 - A_4X_3 - A_3X_4= 0$ (resp.\ $B_2X_1 +
B_1X_2 - B_4X_3 - B_3X_4= 0$). These two $3$-spaces are the same if
and only if the corresponding quadruples $(A_2,A_1,-A_4,-A_3)$ and 
 $(B_2,B_1,-B_4,-B_3)$ are equal up to a multiplicative factor. In other
 words, if the matrices $A$ and $B$ are equal up to a multiplicative factor. From $\det
 A=\det B = 1$ it follows that this factor can only be $1$ or $-1$.
\end{proof}

Pairs of points $A,-A$ that satisfy the conditions of Lemma
\ref{lemma-antipodal}, shall be called \emph{antipodal}. Note that
antipodality is preserved by the automorphisms of Lemma~\ref{lemma-auto}.

\begin{theorem}\label{theo:pairs}
If the affine ovoid $\O$ is a subgroup of $\SL(2,q)$ then it is the disjoint
union of $\frac12(q^2-1)$ antipodal pairs.
\end{theorem}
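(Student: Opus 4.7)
The plan is to show that $-I \in \O$; once that is established, closure of $\O$ under multiplication immediately pairs each $X \in \O$ with its antipode $-X = (-I)\cdot X \in \O$.

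First I would verify that $-I$ is the \emph{unique} element of order $2$ in $\SL(2,q)$. If $X \in \SL(2,q)$ satisfies $X^2 = I$, then the minimal polynomial of $X$ divides $(x-1)(x+1)$. If the minimal polynomial is $x-1$ or $x+1$, then $X = \pm I$. Otherwise $X$ is diagonalizable with eigenvalues $1$ and $-1$, forcing $\det X = -1$, which contradicts $X \in \SL(2,q)$. Hence $-I$ is the only involution. Here we use that $q$ is odd, so $-I \neq I$.

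Next, since $|\O| = q^2 - 1$ is even, Cauchy's theorem guarantees that $\O$ contains an element of order $2$, and by the previous step this element must be $-I$. Therefore $-I \in \O$, and by the subgroup property $-X = (-I)X \in \O$ for every $X \in \O$. Since $q$ is odd, $X = -X$ would force $2X = 0$, i.e.\ $X = 0$, which is impossible for $X \in \SL(2,q)$. Consequently $X$ and $-X$ are distinct, and by Lemma~\ref{lemma-antipodal} they form an antipodal pair.

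The map $X \mapsto -X$ is thus a fixed-point-free involution on $\O$, so it partitions $\O$ into $\frac{1}{2}|\O| = \frac{1}{2}(q^2 - 1)$ antipodal pairs. I do not anticipate any serious obstacle; the only point that needs care is the uniqueness of the involution $-I$ in $\SL(2,q)$, but this follows directly from inspecting the possible minimal polynomials.
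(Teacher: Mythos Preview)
Your proof is correct and takes a genuinely different, more elementary route than the paper. The paper argues by contradiction: assuming $-I \notin \O$, it invokes Lemma~\ref{lemma-count}(ii) (a geometric fact about affine ovoids) to produce an element $X \in \O$ with $\Tr X = -2$ and $X \ne -I$, then shows via the characteristic equation that such an $X$ has order $2p$, contradicting $p \nmid |\O| = q^2 - 1$. You bypass the geometry entirely: since $q$ is odd, $|\O| = q^2 - 1$ is even, Cauchy gives an involution in $\O$, and the eigenvalue argument shows $-I$ is the only involution in $\SL(2,q)$; closure under multiplication by $-I$ finishes. Your argument is purely group-theoretic and in fact proves the stronger statement that \emph{any} subgroup of $\SL(2,q)$ of order $q^2-1$ is closed under $X \mapsto -X$, without using the affine ovoid property at all. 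The paper's route, while more circuitous here, illustrates how Lemma~\ref{lemma-count} constrains the trace spectrum of $\O$---a theme relevant elsewhere in the paper---but for this particular theorem your approach is cleaner.
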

\begin{proof}
  Assume the contrary, and let $A\in\O$ such that $-A\notin
  \O$. Without loss of generality we may set $A=I$. By Lemma
  \ref{lemma-count} (ii) $\O$ must contain at least one element
  $X$ with $\Tr X = -2$ but $X\ne -I$. Such $X$ must satisfy its
  characteristic equation $X^2 + 2X + 1 = 0$. Hence $(X+1)^2 = 0$ and
  then $X^p +1= 0$ (see the proof of Lemma~\ref{lemma-sl2q}). It
  follows that $X$ has order $2p$, and hence that $2p$ must divide the
  order $q^2-1$ of the group $\O$. This is a contradiction.
\end{proof}

(In \cite{DWT2011} this theorem was proved for the special case  $q=5$.)

\section{Spreads of W(3,q) and spread sets}
\label{sec:spreads}
In this section we shall describe
an interesting correspondence between the points of
$\q^*(4,q)$ and certain lines of the 3-dimensional projective space
$\pg(3,q)$.

A {\em spread} $\S$ of $\pg(3,q)$ is a set of lines which partition
the point set of $\pg(3,q)$. A spread necessarily contains $q^2+1$
lines.  A {\em partial spread} of $\pg(3,q)$ is a set of mutually non-intersecting lines. 
It follows that a partial spread is a
spread if and only if it has size $q^2+1$.  A partial spread is {\em
  maximal} if it cannot be extended to a larger partial spread.

Every partial spread that is sufficiently large can always be made
into a full spread by adding appropriate lines, as stated in the
following theorem.
\begin{theorem}[{\cite{MR0290242}}]
\label{theo:spread}
Let $\S$ be a partial spread of $\pg(3,q)$ of size $q^2+1-\delta$. If
$\delta \leq \epsilon$, such that $q+\epsilon$ is smaller than the
smallest non-trivial blocking set of $\pg(2,q)$, then $\S$ is
extendable to a spread.
\end{theorem}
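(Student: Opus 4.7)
The plan is to argue by induction on the deficiency $\delta$: if $\delta=0$ then $\S$ is already a spread, while if $\delta\geq 1$ it suffices to exhibit a single line skew to every line of $\S$, since adding such a line reduces the deficiency to $\delta-1\leq\epsilon-1<\epsilon$, so the hypothesis is preserved and we may iterate.

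To locate such a line, first observe that the number of planes of $\pg(3,q)$ containing a line of $\S$ equals $(q+1)(q^2+1-\delta)$: each line lies in $q+1$ planes, and since the lines of $\S$ are pairwise skew, no plane can contain two of them. Comparing with the total $(q+1)(q^2+1)$ planes, there remain $(q+1)\delta>0$ planes containing no line of $\S$; fix one such plane $\pi$. Then each line of $\S$ meets $\pi$ in exactly one point, and distinct spread lines meet $\pi$ in distinct points (otherwise they would share that point). Writing $H$ for the set of points of $\pg(3,q)$ not on any line of $\S$, this gives $|H\cap\pi|=(q^2+q+1)-(q^2+1-\delta)=q+\delta$.

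The key step is to show that $H\cap\pi$ is a blocking set of the projective plane $\pi$. If some line $m\subset\pi$ were disjoint from $H\cap\pi$, each of its $q+1$ points would lie on a line of $\S$, producing $q+1$ distinct spread lines $\ell_1,\dots,\ell_{q+1}$ through distinct points of $m$. Since none of the $\ell_i$ lies in $\pi$, each $\ell_i$ together with $m$ spans a unique plane, one of the $q$ planes of $\pg(3,q)$ through $m$ other than $\pi$. Pigeonhole then forces two of the $\ell_i$ into a common plane, contradicting that the spread lines are pairwise skew. Hence $H\cap\pi$ is a blocking set of size $q+\delta\leq q+\epsilon$, strictly smaller than any non-trivial blocking set of $\pg(2,q)$ by hypothesis. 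Therefore $H\cap\pi$ must be trivial, that is, it contains a line $\ell$; and since $\ell\subset H$, the line $\ell$ is skew to every line of $\S$, providing the required extension.

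The main obstacle is the blocking-set step, which crucially exploits the choice of a plane $\pi$ carrying no line of $\S$: otherwise the spread lines through $m$ would be distributed among the $q+1$ planes through $m$ (including $\pi$) rather than only $q$, and the pigeonhole count would be off by one. This is what compels the preliminary counting argument, which guarantees the existence of such a plane before the blocking-set reasoning can begin.
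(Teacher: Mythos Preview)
The paper does not supply its own proof of this theorem: it is quoted as a known result from Bruen \cite{MR0290242} and used without argument. So there is no in-paper proof to compare against; what can be said is whether your argument is correct and whether it matches the classical proof.

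Your argument is correct and is essentially Bruen's original proof. The counting of planes containing no spread line, the identification of the hole set $H\cap\pi$ as a blocking set of size $q+\delta$ via the pigeonhole argument on the $q$ planes through $m$ other than $\pi$, and the conclusion that a blocking set below the non-trivial threshold must contain a line, are exactly the ingredients of the standard proof. Your final paragraph correctly isolates the one subtle point: the pigeonhole step needs $q$ target planes rather than $q+1$, which is precisely why one must first locate a plane $\pi$ containing no line of $\S$.
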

(A lower bound for $\epsilon$ is $\sqrt{q}$, and this lower bound is
sharp when $q$ is a square.)

One way to construct spreads is by means of so-called spread sets (see
\cite{MR0233275} for more information).
A \emph{spread set} is a collection $\C$ of $2 \times
  2$-matrices over $\gf(q)$ which satisfies the 
following three conditions:
\goodbreak
\begin{compactenum}[\rm (i)]
\item $|\C|=q^2$;
\item $\C$ contains the zero matrix and the identity matrix;
\item If $X,Y \in \C$, $X \neq Y$, then $\det(X-Y) \neq 0$.
\end{compactenum}

Let $X=\begin{matrix}X_1&X_2\\X_3&X_4\end{matrix}$ denote
a general $2\times 2$ matrix (not necessarily of determinant 1) and
define $L(X)$ to be the line of $\pg(3,q)$ that connects the points
with coordinates $(1,0,X_1,X_2)$ and $(0,1,X_3,X_4)$. 
Recall that the line $L$ through the points $(x_1,x_2,x_3,x_4)$ and
$(y_1,y_2,y_3,y_4)$ can also be represented by its \emph{Pl\"{u}cker
coordinates} $p(L) = (p_{01}, p_{02}, p_{03}, p_{23}, p_{31}, p_{12})$,
satisfying $p_{01}p_{23} + p_{02}p_{31} + p_{03}p_{12} = 0$, where
$p_{ij}\isdef\begin{determ}x_i&x_j\\y_i&y_j
\end{determ}$. The Pl\"{u}cker
  coordinates of $L(X)$ are easily computed to be
\begin{equation}
\label{eq-plucker}
p(L(X)) = (1, X_3, X_4, \det X, -X_1, X_2).
\end{equation}

It is not so difficult to prove that the lines $L(X)$ and $L(Y)$ have
a non-empty intersection
if and only if $\det (X-Y) = 0$.
Hence, if $\C$ is a spread set, then the set of lines
$L(\C) \isdef \{ L(X) \mid X \in \C \}$ is a partial spread of $\pg(3,q)$
of size $q^2$. 

Theorem~\ref{theo:spread} guarantees that we can
find one more line $L'$ such that $L(\C)\cup\{L'\}$ is a full spread
of $\pg(3,q)$, and indeed, in this case it is easily verified that the
line
connecting the points
with coordinates $(0,0,1,0)$ and $(0,0,0,1)$
satisfies this role.
In fact, every spread of $\pg(3,q)$ is equivalent to a
spread which is obtained from a spread set in this way.

Now, affine ovoids of $\SL(2,q)$ yield a natural way to construct
spread sets. Indeed, it follows immediately from Lemma
\ref{lemma-sl2q} that extending an affine ovoid $\O$
with the zero matrix yields a spread set
$\C\isdef\O\cup\{0\}$. The fact that every $X\in\O$ has $\det X=1$
makes this spread set (and the associated spread) rather
special. Indeed, by
(\ref{eq-plucker}), every line of the partial spread $L(\O)$ (i.e., every
line of the full spread, except two) satisfies the identity
$p_{01}=p_{23}$. In other words, every such line is an isotropic line
for the symplectic form
\begin{equation}
\label{eq-form}
\begin{determ}x_0&x_1\\y_0&y_1
\end{determ}
-
\begin{determ}x_2&x_3\\y_2&y_3
\end{determ}.
\end{equation}

If we denote by $\w(3,q)$ the geometry 
where the points are the points of $\pg(3,q)$ and the lines are those
lines of $\pg(3,q)$ that are isotropic with regard to the
symplectic form (\ref{eq-form}), then it follows that 
$L(\O)$ is a partial spread of $\w(3,q)$

It is well known, see e.g. \cite{PayneThas1984}, that $\w(3,q)$ is a
GQ which is equivalent to the dual of $\q(4,q)$. Hence, (maximal)
partial ovoids of $\q(4,q)$ are equivalent to (maximal) 
partial spreads of $\w(3,q)$. The Pl\"{u}cker coordinates in
(\ref{eq-plucker}) demonstrate the explicit correspondence between $L(X)$ and
$X$ for our representation of $\q^*(4,q)$. (This correspondence is
linear because $\det X=1$.)

We could equally well choose to present the theory developed in
Section~\ref{sec:geom} in the framework of partial spreads of
$\w(3,q)$ and spread sets.  The extra condition that $I \in \C$ is not
really a restriction, and is related to the fact that by
Lemma~\ref{lemma-auto} we can also require $I\in\O$ without loss of
generality.

\section{Another explicit description}
\label{sec:rootsys}
In this section we give another explicit description of the
known examples of affine ovoids of $\q^*(4,q)$, although it is not
directly related to $\SL(2,q)$.

We now choose a different
representation of the parabolic quadric $\q(4,q)$, i.e., as the
quadric with equation $X_1^2+X_2^2+X_3^2+X_4^2 = X_0^2$. Two points
$X,Y$ on this quadric are collinear if and only if
$X_1Y_1+X_2Y_2+X_3Y_3+X_4Y_4 = X_0Y_0$.

For $\pi_\infty$ we again take the hyperplane with equation $X_0=0$. 
The affine points of the quadric then satisfy the property $X_0\ne 0$, and
again we may normalize their coordinates by setting $X_0=1$.

We shall consider several sets of vectors of norm 1 in a 4-dimensional real
Euclidean space with the property that the number of mutual inner
products among the vectors is relatively small.
As a first example, consider the following set of 8 vectors~:
\[
\K_8 \isdef \{(\pm 1,0,0,0),(0,\pm 1,0,0), (0,0,\pm 1,0), (0,0,0,\pm 1)\}.
\]
Each vector in this set has norm 1 and inner products between
different elements of $\K_8$ can only take the values $0$ and $-1$, i.e.,
never 1. It follows that the set $\O$ of points with coordinates
$(1,X_1,X_2,X_3,X_4)$, where $(X_1,X_2,X_3,X_4)\in\K_8$, is a set of 8
points of the (real) parabolic quadric where no two points are collinear.
Hence, if we reduce this set modulo 3, we obtain an affine ovoid for
the case $q=3$.

The set $\K_8$ is a root system of rank 4 of type $A_1^4$. Root systems
have the property that they allow only few different values for
inner products. It is therefore natural to investigate whether they
can lead to affine ovoids also for other values of $q$.

Indeed, consider the following root system (of type $D_4$)~:
\[
\K_{24} \isdef \K_8 \cup \{
(\pm \frac{1}{2}, \pm \frac{1}{2}, 
\pm \frac{1}{2}, \pm \frac{1}{2}) \}.
\]
Possible values for inner products of different elements are now $\frac12$,
$0$, $-\frac12$ and $-1$. Hence, reducing modulo 5 yields an affine
ovoid (of size 24) for $q=5$.

The same root system has an alternative representation.
\begin{eqnarray*}
\lefteqn{\K'_{24} \isdef \frac1{\sqrt2} \{ 
(\pm 1, \pm 1, 0, 0), 
(\pm 1, 0, \pm 1, 0),}\\
&&\qquad\qquad\qquad 
(\pm 1, 0, 0, \pm 1), 
(0, \pm 1, \pm 1, 0), 
(0, \pm 1, 0, \pm 1), 
(0, 0, \pm 1, \pm 1) \}.
\end{eqnarray*}
This representation cannot be used in $\gf(5)$ because 2 is not a
square modulo 5. However, in $\gf(7)$ we may write $\sqrt2 = 3$.

It turns out that there are also only a small number of possibilities for
inner products between elements of $\K_{24}$ and $\K'_{24}$, viz.\ $\pm
1/\sqrt2$ and $0$. As a consequence, the set $\K_{48}\isdef
\K_{24}\cup \K'_{24}$ only admits the inner products $-1$, $\pm 1/2$,
$\pm 1/\sqrt2$ and $0$. Modulo 7 all of these are different from 1,
and hence we may use $\K_{48}$ to obtain an affine ovoid when $q=7$.

Note that $\K_{48}$ can be obtained from the root system of type $F_4$ by
normalizing the short and long vectors to become the same length. This
is the largest root system of rank 4, hence for the case $q=11$ we
shall have to look elsewhere.

In that case the 600-cell, a 4-dimensional polytope of type $H_4$,
comes to the rescue. The set $\K_{120}$ of coordinates of the 120
vertices of this polytope can be constructed by extending $\K_{24}$
with the 96 coordinates of the form $\frac12(\pm 1,\pm
\varphi,\pm1/\varphi, 0)$, with $\varphi=\frac12(1+\sqrt5)$ (the
golden ratio), where we allow all \emph{even} permutations of the
coordinates. Inner products among these vertices have values
$-1$, $-\varphi/2$, $-1/2$, $-1/(2\varphi)$, $0$,
  $1/(2\varphi)$, $1/2$, $\varphi/2$ or $1$, where the latter value only
  occurs when both vectors are the same. In $\gf(11)$, $\sqrt 5 = 4$
  and hence $\varphi$ reduces to 8. In other words, $\K_{120}$
  provides an example of an affine ovoid for $q=11$.

Root systems  have been used in the past for constructing other types of combinatorial
object, e.g., (partial) flocks of hyperbolic quadrics by Bader et al.\ \cite{BDLLP2002}.

\section{Concluding remarks}
It was already mentioned that we are convinced that the four affine
ovoids discussed in this paper constitute the full set of existing
examples when $q$ is odd, although so far a proof of this result has completely eluded
us. We think that the representation of affine ovoids within
$\SL(2,q)$ provides the most natural setting for such a proof, as it
allows the use of both group theoretical and combinatorial techniques
for tackling the problem.  On the other hand, the setting of Section
\ref{sec:rootsys} is probably the best way to look for
counterexamples.

By Theorem \ref{th:sharply} our conjecture is equivalent to the
statement that every sharply transitive subset of $\SL(2,q)$ is a
coset of a subgroup. The analogous statement for $\pgl(2,q)$ is true,
cf.\ \cite{DS2003}, and equivalent to the classification of flocks of
the hyperbolic quadric of $\pg(3,q)$, as was first observed by
Bonisoli \cite{Bonisoli1988}. Unfortunately, the techniques used to
prove this statement do not readily carry over to our case.

If our conjecture turns out to be too strong, then at least we expect
all affine ovoids to consist of antipodal pairs (although again, we
had no success in proving this weaker result). In that case, the
problem of classifying all affine ovoids of $\SL(2,q)$ can be reduced
to the same problem for the smaller group $\PSL(2,q)$, an affine ovoid
then being defined as a set of size $\frac12(q^2-1)$ of elements $X,Y$
of $\PSL(2,q)$ such that $\Tr XY^{-1} \ne \pm 2$.

The problem can also be approached through the theory of association
schemes. With $\SL(2,q)$ we may associate a scheme that consists of
the relation of antipodality ($X = -Y$) together with $q$ relations
$R_t$ for $t\in \gf(q)$, where $X\,R_t\ Y$ if and only if $\Tr XY^{-1}
= t$ (and $X\ne \pm Y$). It is not so difficult to compute the various
intersection numbers for this association scheme.
The use of standard methods from the theory of association schemes is
however somewhat hindered by the fact that the number of classes
varies with $q$. 

This setting is closely related to the group representation theory of
$\SL(2,q)$. Indeed, the conjugacy classes of $\SL(2,q)$ correspond to
the sets of matrices of given trace, where extra provision should be
made for the singleton classes $\{I\}$ and $\{-I\}$. In this context
it would also be nice to extend Lemma~\ref{lemma-count} with a
non-trivial result for $\delta(t) < 0$.

Yet another alternative is to employ a computer to at least tackle the
smaller cases. We have proved by computer that for $q=3$, $5$, $7$ and $11$ the affine ovoids are indeed unique and that for $q=9$ none exist
(which confirms the result of \cite{DeBeuleGacs}). For $q=9$ our program takes only a few minutes of CPU time while for $q=11$ already three weeks were needed. We fear that for larger values of $q$ the problem may already be intractable by standard methods. Again a stronger
version of Lemma~\ref{lemma-count} might be of help.

\end{document}